\documentclass[11pt]{amsart}

\usepackage[margin=1.25in]{geometry}

\usepackage{graphicx}

\usepackage{amsmath, amssymb, amsthm, graphicx, enumerate, tikz, hyperref, float}
\usepackage{mathtools, comment}
\usetikzlibrary{matrix,arrows,decorations.pathmorphing}

\newtheorem{theorem}{Theorem}
\newtheorem{lemma}[theorem]{Lemma}

\newtheorem{claim}[theorem]{Claim}

\theoremstyle{definition}
\newtheorem{definition}[theorem]{Definition}

\newtheorem*{solution*}{Solution}
\newtheorem{hyp}[theorem]{Hypothesis}

\numberwithin{theorem}{section}

\DeclareMathOperator{\Irr}{Irr}
\DeclareMathOperator{\cd}{cd}

\begin{document}

\allowdisplaybreaks

\title{On the absence of a normal nonabelian Sylow subgroup}

\author{Mark W. Bissler}
\address{Department of General Education - College of Business, Western Governors University, Salt Lake City, Utah 84107}
\email{mark.bissler@wgu.edu}

\author{Jacob Laubacher}
\address{Department of Mathematics, St. Norbert College, De Pere, Wisconsin 54115}
\email{jacob.laubacher@snc.edu}

\author{Corey F. Lyons}
\address{Department of Mathematics and Physics, Colorado State University - Pueblo, Pueblo, Colorado 81001}
\email{coreylyons314@gmail.com}

\subjclass[2010]{Primary 20D10; Secondary 20D20}

\date{\today}

\keywords{finite solvable groups, normal nonabelian subgroup, character degree graphs}

\begin{abstract}
Let $G$ be a finite solvable group. We show that $G$ does not have a normal nonabelian Sylow $p$-subgroup when its prime character degree graph $\Delta(G)$ satisfies a technical hypothesis.
\end{abstract}

\maketitle

\section{Introduction and Preliminaries}

In this paper we fix $G$ to be a finite solvable group. Set $\mathbf{F}(G)$ to be the Fitting subgroup of $G$, write $\Irr(G)$ for the set of irreducible characters of $G$, and let $\cd(G)$ denote the set of irreducible character degrees of $G$. Denote $\rho(G)$ to be the set of primes that divide degrees in $\cd(G)$. The prime character degree graph of $G$, written $\Delta(G)$, is the graph whose vertex set is $\rho(G)$. Two vertices $p$ and $q$ of $\Delta(G)$ are adjacent if there exists $d\in\cd(G)$ such that $pq$ divides $d$. In this paper, we concern ourselves solely with graphs $\Delta(G)$ that are connected with $\rho(G)\geq5$.

We fix the following notation for an arbitrary vertex $p\in\rho(G)$: $$\pi:=\{q\in\rho(G)~:~\text{$q$ is adjacent to $p$ in $\Delta(G)$}\}$$ and $$\rho:=\{q\in\rho(G)~:~\text{$q$ is not adjacent to $p$ in $\Delta(G)$}\}.$$ Observe that $\rho$ induces a complete subgraph in $\Delta(G)$ due to P\'alfy's condition from \cite{P1}, and that $\rho(G)$ is a disjoint union: $\{p\}\cup\pi\cup\rho$. We let $\pi^*$ and $\rho^*$ denote arbitrary nonempty subsets of $\pi$ and $\rho$, respectively. Finally, let $\pi^*\cup\rho^*$ be an arbitrary vertex set which induces a complete subgraph in $\Delta(G)$. Let $\beta$ be a subset of $\rho(G)$ that contains $\pi^*\cup\rho^*$ such that $\beta$ also induces a complete subgraph in $\Delta(G)$. Set $\mathcal{B}$ to be the union of all such $\beta$'s satisfying these properties for $\pi^*\cup\rho^*$. Consider $\tau:=\mathcal{B}\setminus(\pi^*\cup\rho^*)$, and denote $\tau^*$ to be a subset of $\tau$. Notice that $\tau$ could be empty, depending on the initial set $\pi^*\cup\rho^*$.

In this note we build upon the work done in \cite{BL1} and \cite{L} which concerns a normal nonabelian Sylow $p$-subgroup of a finite solvable group. We use tools from \cite{BL1} which classify vertices as admissible. Recall the definition and result directly related to admissible vertices below.

\begin{definition}(\cite{BL1})
A vertex $q$ of a graph $\Gamma$ is \textbf{admissible} if:
\begin{enumerate}[(i)]
    \item the subgraph of $\Gamma$ obtained by removing $q$ and all edges incident to $q$ does not occur as the prime character degree graph of any solvable group, and
    \item none of the subgraphs of $\Gamma$ obtained by removing one or more of the edges incident to $q$ occur as the prime character degree graph of any solvable group.
\end{enumerate}
\end{definition}

\begin{lemma}[Lemma 2.1 from \cite{BL1}]\label{lemma0}
Let $q$ be an admissible vertex of $\Delta(G)$. For every proper normal subgroup $N$ of $G$, suppose that $\Delta(N)$ is a proper subgraph of $\Delta(G)$. Then $O^q(G)=G$.
\end{lemma}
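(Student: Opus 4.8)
The plan is to argue by contradiction. Suppose $O^q(G)\neq G$ and set $N:=O^q(G)$. Since $O^q(G)$ is characteristic in $G$, it is a proper normal subgroup, and by definition $G/N$ is a nontrivial $q$-group, so $[G:N]$ is a nontrivial power of $q$. By the standing hypothesis, $\Delta(N)$ is a proper subgraph of $\Delta(G)$. The whole strategy is to show that the discrepancy between $\Delta(N)$ and $\Delta(G)$ is confined entirely to the vertex $q$: either $q$ (together with its incident edges) is deleted, or some edges incident to $q$ are deleted, with everything else preserved. Since $N$ is solvable and realizes $\Delta(N)$, this will directly contradict one of the two clauses in the definition of admissibility.

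First I would compare vertex sets. Every $\theta\in\Irr(N)$ lies under some $\chi\in\Irr(G)$ with $\theta(1)\mid\chi(1)$, which gives $\rho(N)\subseteq\rho(G)$. For the reverse containment away from $q$, take a prime $r\in\rho(G)$ with $r\neq q$ and a character $\chi\in\Irr(G)$ with $r\mid\chi(1)$; choosing an irreducible constituent $\theta$ of $\chi_N$, the standard Clifford-theoretic divisibility $\chi(1)/\theta(1)\mid[G:N]$ forces $\chi(1)/\theta(1)$ to be a power of $q$, and since $r\neq q$ we conclude $r\mid\theta(1)$, so $r\in\rho(N)$. Hence $\rho(G)\setminus\{q\}\subseteq\rho(N)\subseteq\rho(G)$, meaning $\rho(N)$ is either $\rho(G)$ or $\rho(G)\setminus\{q\}$.

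Next I would compare edges not incident to $q$. For primes $r,s\in\rho(G)$ with $r,s\neq q$, the same two-constituent argument shows adjacency is preserved in both directions: if $rs\mid\theta(1)$ for some $\theta\in\Irr(N)$, lifting to $\chi\in\Irr(G)$ over $\theta$ yields $rs\mid\chi(1)$; conversely, if $rs\mid\chi(1)$ then, because $\chi(1)/\theta(1)$ is a power of $q$ and $r,s\neq q$, we obtain $rs\mid\theta(1)$. Thus all edges of $\Delta(G)$ and $\Delta(N)$ that avoid $q$ coincide. I then split into the two cases for $\rho(N)$. If $q\notin\rho(N)$, then $\Delta(N)$ is precisely $\Delta(G)$ with $q$ and all its incident edges removed, which by admissibility clause (i) cannot be the prime character degree graph of any solvable group, contradicting the existence of $N$. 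If $q\in\rho(N)$, then $\Delta(N)$ and $\Delta(G)$ have the same vertex set and agree on all edges off $q$; since $\Delta(N)$ is a \emph{proper} subgraph by hypothesis, it must differ from $\Delta(G)$ by the deletion of one or more edges incident to $q$, which is forbidden by admissibility clause (ii). Either case is the desired contradiction, so $O^q(G)=G$.

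The crux of the argument is the localization step: showing that passing from $G$ to $N=O^q(G)$ alters $\Delta(G)$ only at the vertex $q$. This rests squarely on the divisibility $\chi(1)/\theta(1)\mid[G:N]$ combined with $[G:N]$ being a $q$-power, which is exactly what prevents any prime other than $q$ from being lost from a degree when restricting to $N$. The remaining delicacy is bookkeeping: one must match the two cases ($q$ surviving in $\rho(N)$ or not) to the two distinct clauses of admissibility, and in the surviving case one must actually invoke the properness hypothesis to rule out $\Delta(N)=\Delta(G)$ before concluding that an edge at $q$ has been deleted.
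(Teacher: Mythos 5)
Your proposal is correct and follows essentially the same route as the standard argument for this lemma (the paper itself only quotes it from \cite{BL1} without reproducing the proof): the divisibility $\chi(1)/\theta(1)\mid |G:O^q(G)|$, a power of $q$, localizes every difference between $\Delta(O^q(G))$ and $\Delta(G)$ at the vertex $q$, and the two cases for whether $q$ survives in $\rho(O^q(G))$ line up exactly with the two clauses of admissibility. The bookkeeping, including the use of the properness hypothesis to rule out $\Delta(N)=\Delta(G)$ in the second case, is handled correctly.
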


Most notably, our results pertain to groups $G$ and graphs $\Delta(G)$ which do not satisfy Lemmas 2.3 and 2.4 from \cite{BL1}. They also conclude in those lemmas that there is no normal nonabelian Sylow $p$-subgroup. This is a typical approach en route to showing that a graph cannot occur as the prime character degree graph of any solvable group.

Next we recall several lemmas due to Lewis which pertain to groups having a normal nonabelian Sylow subgroup.

\begin{lemma}[Lemma 4.2 from \cite{L}]
Let $G$ be a solvable group with a normal nonabelian Sylow $p$-subgroup $P$. Let $H$ be a $p$-complement for $G$. Assume that $H$ acts faithfully on $P$, that $\rho\subseteq\rho(H)$, and that the Fitting subgroup of $H$ is a $\pi$-group. Then $\rho(H)=\rho$.
\end{lemma}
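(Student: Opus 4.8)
The plan is to first translate the statement into a single containment. Since $P$ is a normal Sylow $p$-subgroup with complement $H$, we have $G=PH$ with $P\cap H=1$, so $H\cong G/P$ and every $\theta\in\Irr(H)$ inflates to a character of $G$ with $P$ in its kernel; consequently $\cd(H)\subseteq\cd(G)$ and $\rho(H)\subseteq\rho(G)$. Because $p\nmid|H|$ we have $p\notin\rho(H)$, and therefore $\rho(H)\subseteq\pi\cup\rho$. As the hypothesis already gives $\rho\subseteq\rho(H)$, proving the lemma amounts to establishing that no neighbor of $p$ lies in $\rho(H)$; that is, $\rho(H)\cap\pi=\emptyset$.

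To this end I would argue by contradiction, assuming some $q\in\rho(H)\cap\pi$. Since $q\in\pi$ there is $\chi\in\Irr(G)$ with $pq\mid\chi(1)$. I first record that $p\mid\chi(1)$ forces $\chi$ to lie over a nonlinear $\varphi\in\Irr(P)$: if $\chi$ lay over a linear constituent of $\Irr(P)$, then Clifford theory would give $\chi(1)=[H:I_H(\varphi)]\cdot e$ with both factors dividing $|H|$, hence coprime to $p$. So $\varphi$ is nonlinear, $\varphi(1)=p^{a}$ with $a\ge 1$, and writing $T=I_H(\varphi)$ for its inertia subgroup in $H$ (note $P\le I_G(\varphi)$ since inner automorphisms fix $\varphi$, so $I_G(\varphi)=P\rtimes T$), the Clifford correspondence yields $\chi(1)=[H:T]\,\varphi(1)\,e$ where $e=\psi(1)/\varphi(1)$ divides $|T|$. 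In particular the $p'$-part of $\chi(1)$ is $[H:T]\,e$, so $q$ divides $[H:T]\,e$.

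The heart of the argument is to show this is incompatible with $q\in\rho(H)$, and here I would use the two remaining hypotheses. Faithfulness and coprimality let me pass to the faithful action of $H$ on $V=P/\Phi(P)$ and study the $H$-orbits on $\Irr(P)$, so that the inertia index $[H:T]$ is an orbit length and $e$ is a degree arising from the action of $T$ on $\varphi$. The hypothesis that $\mathbf{F}(H)$ is a $\pi$-group is what should pin down which primes can occur: the primes that become adjacent to $p$ through such nonlinear $\varphi$ are exactly those feeding the faithful action of the $\pi$-group $\mathbf{F}(H)$ (and its overgroup) on $P$, and the aim is to confine these to $\pi$ in a way disjoint from the genuine degree primes of $H$. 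Concretely, the goal of this step is to prove that a prime dividing some inertia index $[H:T]$ or inner degree $e$ attached to a nonlinear $\varphi$ cannot also divide a degree in $\cd(H)$, contradicting $q\in\rho(H)$. I expect this to be the main obstacle, and the place where a coprime-action/orbit input in the spirit of the preceding results in \cite{L} is needed to separate the $\rho(H)$-primes, which are controlled by the structure of $H$ above $\mathbf{F}(H)$, from the $p$-adjacent primes, which are controlled by the $\pi$-group $\mathbf{F}(H)$ acting faithfully on $P$.

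Once that separation is in hand, the assumed $q\in\rho(H)\cap\pi$ cannot exist, giving $\rho(H)\cap\pi=\emptyset$ and hence $\rho(H)\subseteq\rho$. Combined with the hypothesis $\rho\subseteq\rho(H)$ recorded in the first paragraph, this yields $\rho(H)=\rho$, as desired.
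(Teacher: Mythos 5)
First, a point of orientation: the paper does not prove this statement at all --- it is Lemma 4.2 of Lewis \cite{L}, quoted verbatim as a known input, so there is no in-paper proof to compare against. Your proposal therefore has to stand on its own, and as written it does not: the entire content of the lemma is concentrated in your third paragraph, which you yourself flag as ``the main obstacle'' and leave as a description of what you hope is true rather than an argument. Your first two paragraphs are fine --- reducing to $\rho(H)\cap\pi=\emptyset$, and showing via Clifford theory that any $q\in\pi$ must divide $[H:T]\,e$ for some nonlinear $\varphi\in\Irr(P)$ --- but they are routine bookkeeping. The ``separation'' claim you then pose (no prime dividing an inertia index $[H:T]$ or inner degree $e$ attached to a nonlinear $\varphi$ can also divide a degree in $\cd(H)$) is, given those two paragraphs, essentially a restatement of the conclusion $\pi\cap\rho(H)=\emptyset$: primes of $\pi$ are exactly those arising as divisors of such $[H:T]\,e$, and primes of $\rho(H)$ are by definition those dividing degrees of $H$. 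So you have reformulated the lemma, not proved it.

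Two further signs that the missing step is not a routine fill-in. First, your proposed separation statement makes no visible use of the hypothesis $\rho\subseteq\rho(H)$, yet that hypothesis (together with $\mathbf{F}(H)$ being a $\pi$-group) must carry real weight --- Lewis imposes all three hypotheses, and the disconnectedness phenomena this lemma feeds into (his Lemma 4.3, P\'alfy's inequality) depend on the interplay between the $\rho$-degrees of $H$ and the faithful coprime action on $P$. Second, there is no general mechanism in Clifford/Gallagher theory preventing a prime from simultaneously dividing some $[H:T]$ and some member of $\cd(H)$; indeed Gallagher's theorem tends to merge these two sources of degrees rather than separate them. So the separation, if it holds, holds for structural reasons specific to this hypothesis set, and that is precisely the argument you have not supplied. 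To complete the proof you would need to consult or reconstruct Lewis's actual argument (which exploits the faithful coprime action of $H$ on $P$ and the fact that $C_H(\mathbf{F}(H))\leq\mathbf{F}(H)$ with $\mathbf{F}(H)$ a $\pi$-group), rather than appeal to an unstated ``coprime-action/orbit input.''
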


\begin{lemma}[Lemma 4.3 from \cite{L}]
Let $G$ be a solvable group with a normal nonabelian Sylow $p$-subgroup $P$. Let $H$ be a $p$-complement for $G$. Suppose that $P'$ is minimal normal in $G$, and $\rho(H)=\rho$ is not empty. Then there is a nonempty subset $\pi^*$ of $\pi$ so that $\Delta(P'H)$ has two connected components: $\rho$ and $\pi^*$. Furthermore, if $|\rho|=n$, then $|\pi^*|\geq2^n-1$.
\end{lemma}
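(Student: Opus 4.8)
The plan is to reduce everything to the action of $H$ on the abelian normal subgroup $P'$ and then read off $\Delta(P'H)$ from Clifford theory. First I would pin down the module structure. As $P'$ is minimal normal in the solvable group $G$, it is elementary abelian of exponent $p$; and since the $p$-group $P$ acts on the $p$-group $P'$, the fixed-point subgroup $C_{P'}(P)$ is nontrivial and normal in $G=PH$, so minimality forces $C_{P'}(P)=P'$. Hence $P'\leq Z(P)$ and the conjugation action of $G$ on $P'$ factors through $H$, making $V:=P'$ an irreducible $H$-module on which $H$ acts coprimely. Writing $T_\lambda:=I_H(\lambda)$ and using that coprimeness lets every $H$-invariant $\lambda\in\Irr(V)$ extend to $VT_\lambda$, Clifford theory together with Gallagher's theorem yields
\[
\cd(P'H)=\bigl\{\,|H:T_\lambda|\,\psi(1)\ :\ \lambda\in\Irr(V)/H,\ \psi\in\Irr(T_\lambda)\,\bigr\}.
\]
The trivial orbit $\lambda=1$ contributes exactly $\cd(H)$, whose prime set is $\rho(H)=\rho$ by hypothesis.

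I would then prove the two-component statement by a lifting argument into $G$. Fix a nontrivial $\lambda\in\Irr(V)$ and pick $\theta\in\Irr(P\mid\lambda)$; since $\lambda$ is nontrivial on $P'=[P,P]$, the character $\theta$ is nonlinear, so $p\mid\theta(1)$. For a prime $s$ dividing $|H:T_\lambda|$ this is immediate: any $\chi\in\Irr(G\mid\theta)$ satisfies $|H:I_H(\theta)|\mid\chi(1)$ and $I_H(\theta)\leq T_\lambda$, so $|H:T_\lambda|\mid\chi(1)$ while $p\mid\theta(1)\mid\chi(1)$, and thus $s$ is adjacent to $p$ in $\Delta(G)$. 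For a prime $s$ dividing some $\psi(1)$ with $\psi\in\Irr(T_\lambda)$ the same conclusion should follow from a finer comparison of $\cd(G\mid\lambda)$ with $\cd(P'H\mid\lambda)$ over the common inertia data, using that the factor group $P/P'$ contributes only a $p$-power to the relevant $G$-degrees; this comparison is the delicate technical point of this step. The upshot is that every prime dividing a nontrivial-orbit degree is adjacent to $p$, hence lies in $\pi$ and avoids $\rho$. Consequently each degree of $P'H$ is either a $\rho$-number (from $\lambda=1$) or a $\pi$-number (from $\lambda\neq1$), so $\Delta(P'H)$ carries no edge between $\rho$ and $\pi^*:=\rho(P'H)\setminus\rho\subseteq\pi$. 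As $P$ is nonabelian the $H$-action on $V$ is nontrivial, giving a nontrivial orbit and $\pi^*\neq\varnothing$; since the prime character degree graph of a solvable group has at most two connected components, this forces $\rho$ to be connected and $\Delta(P'H)$ to have precisely the two components $\rho$ and $\pi^*$.

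The remaining assertion $|\pi^*|\geq 2^{n}-1$, with $n=|\rho|$, is the step I expect to be the main obstacle. From the disconnection I would first extract the structural constraint that, for every nontrivial $\lambda$, the index $|H:T_\lambda|$ is a $\rho'$-number and $\rho(T_\lambda)\cap\rho=\varnothing$; by the It\^o--Michler theorem each stabilizer $T_\lambda$ then has an abelian normal Sylow $s$-subgroup for every $s\in\rho$, even though $H$ does not since $s\in\rho(H)$. Exploiting this ``every prime of $\rho$ is visible in $\cd(H)$ but invisible in every point stabilizer'' tension, I would induct on $n$: choosing $s\in\rho$ with Sylow subgroup $Q_s$, I would pass to a suitable section of $H$ for which the corresponding $\rho$-set is $\rho\setminus\{s\}$, invoke the inductive bound $2^{\,n-1}-1$, and then argue that reinstating $s$ refines the $H$-orbit structure on $\Irr(V)$ into at least two families carrying disjoint collections of $\pi$-primes. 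This would give the recursion $a_n\geq 2a_{n-1}+1$ with $a_1\geq1$, hence $a_n\geq 2^{n}-1$. Establishing the doubling at each stage --- that each additional prime of $\rho$ genuinely forces twice as many distinct adjacency-to-$p$ primes in order to keep it separated from $p$ --- is the genuinely combinatorial core of the lemma and the part I expect to require the most work.
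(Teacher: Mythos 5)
First, note that the paper offers no proof of this statement: it is quoted, with attribution, as Lemma~4.3 of \cite{L} and used as a black box in the proof of Claim~\ref{pisub}, so the only possible comparison is between your reconstruction and the argument in \cite{L}. The first half of your proposal is essentially that argument: $P'\leq Z(P)$, the description of $\cd(P'H)$ by Clifford theory and Gallagher, and the observation that $I_H(\theta)\leq T_\lambda$ forces every prime dividing $|H:T_\lambda|$ to be adjacent to $p$ in $\Delta(G)$ are all correct. The step you flag as the ``delicate technical point'' --- that primes dividing $\psi(1)$ for $\psi\in\Irr(T_\lambda)$ also land in $\pi$ --- is a genuine omission, but it is closed by a standard coprimality argument rather than by the ``finer comparison of inertia data'' you gesture at: since $T_\lambda$ acts coprimely on $P$ and fixes $\lambda$, a Glauberman-type fixed-point argument yields a $T_\lambda$-invariant $\theta\in\Irr(P\mid\lambda)$; then $I_G(\theta)=PT_\lambda$ exactly, $\theta$ extends to $PT_\lambda$ by coprimality, and Gallagher together with the Clifford correspondence places $|H:T_\lambda|\,\theta(1)\,\psi(1)$ in $\cd(G)$ for every $\psi\in\Irr(T_\lambda)$, with $p\mid\theta(1)$. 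You should make this explicit; without it the disconnection of $\Delta(P'H)$ is not established.

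The quantitative claim $|\pi^*|\geq 2^n-1$ is where the proposal genuinely breaks down, and that is exactly the half of the lemma this paper relies on (inequality \eqref{cool} in Claim~\ref{pisub}). Your plan is an induction on $n=|\rho|$ via a recursion $a_n\geq 2a_{n-1}+1$, but no mechanism for the doubling is given --- the assertion that ``reinstating $s$ refines the orbit structure into two families carrying disjoint collections of $\pi$-primes'' is exactly what would need to be proved --- and the induction is not even well posed: the statement being inducted on concerns a group with a normal nonabelian Sylow $p$-subgroup whose derived subgroup is minimal normal, and no section of the $p'$-group $H$ is an instance of that statement, so there is nothing to which the inductive hypothesis applies. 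The bound is not obtained this way in the literature: it is precisely P\'alfy's inequality for disconnected degree graphs (\cite{P2}, quoted as the final theorem of Section~1 of this paper) applied to the disconnected graph $\Delta(P'H)$. The only content beyond P\'alfy is the identification of $\rho$ as the \emph{smaller} of the two components --- equivalently, that the component $\pi^*$ arising from the orbits of $H$ on $\Irr(P')\setminus\{1\}$ is the one bounded below --- which is extracted from the structure theory of solvable groups with disconnected degree graph in \cite{Lewis}, not from an orbit-refinement count. As written, this half of your proof is a conjecture, not an argument.
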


\begin{lemma}[Lemma 3 from \cite{MLL}]
Let $G$ be a solvable group and let $p\in\rho(G)$. If $P$ is a normal Sylow $p$-subgroup, then $\rho(G/P')=\rho(G)\setminus\{p\}$.
\end{lemma}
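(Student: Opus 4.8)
The plan is to prove the two inclusions $\rho(G/P')\subseteq\rho(G)\setminus\{p\}$ and $\rho(G)\setminus\{p\}\subseteq\rho(G/P')$ separately, using throughout the It\^o--Michler criterion in the form: for a prime $r$, one has $r\in\rho(G)$ if and only if $G$ does not possess an abelian normal Sylow $r$-subgroup (and likewise with $G$ replaced by $G/P'$). The first inclusion is routine. Since every $\chi\in\Irr(G/P')$ inflates to a character of $G$ of the same degree, we have $\cd(G/P')\subseteq\cd(G)$ and hence $\rho(G/P')\subseteq\rho(G)$. To exclude $p$, note that $P/P'$ is an abelian normal Sylow $p$-subgroup of $G/P'$: it is abelian because $(P/P')'=1$, normal because $P\trianglelefteq G$, and Sylow because its index $[G:P]$ is prime to $p$. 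By It\^o's theorem every degree in $\cd(G/P')$ divides $[G:P]$, so none is divisible by $p$, giving $p\notin\rho(G/P')$.

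The substance of the lemma is the reverse inclusion, which I would establish in contrapositive form: fixing a prime $q\neq p$, I will show that $q\notin\rho(G/P')$ forces $q\notin\rho(G)$. Assume $q\notin\rho(G/P')$. By It\^o--Michler, $G/P'$ has an abelian normal Sylow $q$-subgroup; since $q\neq p$ this has the form $\bar Q=QP'/P'$ for some $Q\in\mathrm{Syl}_q(G)$ with $Q\cap P'=1$, so $Q$ is abelian and $QP'\trianglelefteq G$. The image $\bar P=P/P'$ is a normal $p$-subgroup of $G/P'$, so $\bar Q$ and $\bar P$ are normal subgroups of $G/P'$ of coprime order; hence $[\bar Q,\bar P]\le\bar Q\cap\bar P=1$, and pulling this back gives $[Q,P]\le P'$. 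In other words, $Q$ centralizes $P/P'$.

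The crucial step is then a coprime-action argument. As $Q$ is a $q$-group acting on the $p$-group $P$ with $\gcd(\abs{Q},\abs{P})=1$, and as $P'\le\Phi(P)$, the fact that $Q$ centralizes $P/P'$ implies that $Q$ centralizes the further quotient $P/\Phi(P)$; the standard coprime-action lemma (the automorphisms of $P$ acting trivially on $P/\Phi(P)$ form a $p$-group) then yields $[P,Q]=1$. Consequently $QP'=Q\times P'$, so $Q$ is the unique Sylow $q$-subgroup of this direct product and is therefore characteristic in $QP'$. Since $QP'\trianglelefteq G$, we obtain $Q\trianglelefteq G$. Thus $G$ has an abelian normal Sylow $q$-subgroup, and It\^o--Michler gives $q\notin\rho(G)$, which is exactly the contrapositive we sought.

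I expect the coprime-action step---deducing $[P,Q]=1$ from the weaker datum that $Q$ merely normalizes $QP'$ modulo $P'$---to be the main obstacle, and it is the conceptual heart of the statement. It is precisely this step that rules out the otherwise plausible configuration (an $A_4$-type action of $Q$ on $P'$) in which a prime could be present in $\rho(G)$ yet lost only upon factoring out $P'$; the coprimality of the action forbids a nontrivial action on $P'$ combined with a trivial action on $P/P'$. Everything else reduces to bookkeeping with inflation and the It\^o--Michler criterion.
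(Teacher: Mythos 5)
The paper does not prove this statement: it is imported verbatim as Lemma 3 of the cited Lewis paper, so there is no in-paper argument to compare against. Your proof is correct and self-contained. The easy inclusion is handled properly (inflation gives $\cd(G/P')\subseteq\cd(G)$, and It\^o's theorem applied to the abelian normal Sylow subgroup $P/P'$ removes $p$), and the substantive reverse inclusion is sound. Two points are worth making explicit. First, the converse direction of It\^o--Michler that you invoke --- no degree divisible by $q$ forces a normal abelian Sylow $q$-subgroup --- is available without the classification precisely because $G$ and $G/P'$ are solvable; you should say you are using the solvable case. Second, the coprime-action step, which you correctly identify as the heart of the matter, works exactly as you describe: from $[Q,P]\le P'\le\Phi(P)$ and Burnside's theorem that the kernel of $\mathrm{Aut}(P)\to\mathrm{Aut}(P/\Phi(P))$ is a $p$-group, the image of the $q$-group $Q$ in $\mathrm{Aut}(P)$ is trivial, so $[P,Q]=1$; then $Q$ is the unique Sylow $q$-subgroup of the normal subgroup $QP'=Q\times P'$, hence characteristic there and normal in $G$, and the easy direction of It\^o finishes. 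No gap.
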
 

To conclude this section, we recall two powerful results concerning when the prime character degree graph is disconnected.

\begin{theorem}[Theorem 5.5 from \cite{Lewis}]
Let $G$ be a solvable group and suppose that $\Delta(G)$ has two connected components. Then there is precisely one prime $p$ so that the Sylow $p$-subgroup of the Fitting subgroup of $G$ is not central in $G$.
\end{theorem}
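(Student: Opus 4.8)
The plan is to separate existence (there is at least one such prime) from uniqueness (there is at most one), the latter being where the real work lies. For existence, I would suppose toward a contradiction that every Sylow subgroup $O_p(G)$ of $\mathbf{F}(G)$ is central in $G$. Then $\mathbf{F}(G)=\prod_p O_p(G)\leq \mathbf{Z}(G)$, and since the center is always contained in the Fitting subgroup we obtain $\mathbf{F}(G)=\mathbf{Z}(G)$. As $G$ is solvable, $\mathbf{C}_G(\mathbf{F}(G))\leq \mathbf{F}(G)$; but $\mathbf{F}(G)$ being central forces $\mathbf{C}_G(\mathbf{F}(G))=G$, whence $G=\mathbf{F}(G)$ is nilpotent. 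A nilpotent group has a complete prime character degree graph (any two primes with nonabelian Sylow subgroups are joined by the product of two nonlinear degrees), so $\Delta(G)$ would be connected, contradicting the hypothesis of two components. Hence at least one $O_p(G)$ is non-central.

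For uniqueness, I would first record a Clifford-theoretic reduction. If $p\neq q$ are primes with $O_p(G)$ and $O_q(G)$ both nonabelian, pick nonlinear $\alpha\in\Irr(O_p(G))$ and $\beta\in\Irr(O_q(G))$ and form $\theta=\alpha\times\beta\times 1\in\Irr(\mathbf{F}(G))$, so that $pq\mid\theta(1)$. Every $\chi\in\Irr(G)$ lying over $\theta$ satisfies $\theta(1)\mid\chi(1)$, producing a degree divisible by $pq$; thus $p$ and $q$ are adjacent in $\Delta(G)$ and must lie in the same connected component. In particular, two non-central Fitting primes can never be separated by the two components of $\Delta(G)$. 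The more delicate situation is when a non-central $O_p(G)$ is abelian: here non-centrality comes from a nontrivial coprime action, which by considering $G$-orbits on $\Irr(W)$ for a non-central chief factor $W\leq O_p(G)$ yields a degree divisible by the primes of $|G:\mathbf{C}_G(W)|$ but not necessarily by $p$ itself, so $p$ need not even be a vertex of $\Delta(G)$.

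To finish, I would invoke the structural description of solvable groups with a disconnected degree graph: after the standard reductions such a $G$ is either a two-Frobenius group (with a normal series $1<L<K<G$ in which $K$ and $G/L$ are Frobenius) or a group governed by a normal nonabelian Sylow subgroup attached to one component. In the first case $\mathbf{F}(G)=L$ is a group for a single prime, and in the second the normal Sylow subgroup is the only non-central Sylow subgroup of $\mathbf{F}(G)$; in every configuration the non-central part of $\mathbf{F}(G)$ is concentrated at one prime. Combining this with the reduction above, which already forbids two non-central Fitting primes from lying in different components, pins the count at exactly one. The main obstacle is precisely the abelian non-central case: unlike the nonabelian case it is not detected directly by an edge of $\Delta(G)$, so locating $\mathbf{F}(G)$ relative to the two components genuinely requires the structural classification (through P\'alfy's condition, Ito--Michler, and the Frobenius analysis) rather than a purely local Clifford-theoretic argument.
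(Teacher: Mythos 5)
This statement is quoted in the paper as an external result (Theorem 5.5 of Lewis, \emph{J. Group Theory} 4 (2001)); the paper supplies no proof of its own, so your attempt can only be judged on its internal merits. The existence half is fine: if every Sylow subgroup of $\mathbf{F}(G)$ were central then $\mathbf{F}(G)=\mathbf{Z}(G)$, and $\mathbf{C}_G(\mathbf{F}(G))\leq\mathbf{F}(G)$ forces $G=\mathbf{F}(G)$, whose degree graph is complete or empty and in either case does not have two components.

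The uniqueness half, which is the entire content of the theorem, has a genuine gap. Your Clifford-theoretic reduction shows only that if $O_p(G)$ and $O_q(G)$ are both \emph{nonabelian} then $p$ and $q$ are adjacent in $\Delta(G)$; that places them in the same connected component and contradicts nothing, so it does not bound the number of non-central Fitting primes by one. You correctly identify the harder case --- a non-central but abelian $O_p(G)$, where $p$ may not even be a vertex of $\Delta(G)$ --- but you resolve neither case: the argument terminates in an appeal to ``the structural description of solvable groups with a disconnected degree graph,'' which you do not prove, which you state too coarsely (Lewis's classification comprises six families, not all of which are two-Frobenius groups or groups with a normal nonabelian Sylow subgroup), and which is logically suspect here because the classification in Lewis's paper is itself built on structural facts of exactly this kind. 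As written, the statement ``in every configuration the non-central part of $\mathbf{F}(G)$ is concentrated at one prime'' is a restatement of the theorem, not a proof of it. To close the gap you would need a direct argument --- for instance, analyzing the action of $G$ on non-central chief factors inside $\mathbf{F}(G)$ and showing that two distinct non-central primes force either an edge joining the two components or a connected graph --- rather than citing the classification whose proof presupposes this result.
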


\begin{theorem}[P\'alfy's inequality from \cite{P2}]
Let $G$ be a solvable group and $\Delta(G)$ its prime character degree graph. Suppose that $\Delta(G)$ is disconnected with two components having size $a$ and $b$, where $a\leq b$. Then $b\geq2^a-1$.
\end{theorem}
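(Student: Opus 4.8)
The plan is to exploit the rigidity that a disconnected degree graph imposes on a solvable group, and to convert the graph-theoretic datum $a\leq b$ into a counting problem about a coprime module action. First I would record a qualitative reduction coming from P\'alfy's condition (\cite{P1}): when $\Delta(G)$ has exactly two components, each component is in fact a \emph{complete} subgraph. Indeed, given two vertices $u,v$ of one component and any vertex $w$ of the other, the three primes must contain an adjacent pair; since $w$ is adjacent to neither $u$ nor $v$, the pair $u,v$ must be adjacent. This lets me forget the internal edge structure and speak of the two components purely through their prime sets, say $\{r_1,\dots,r_a\}$ for the smaller and a set of size $b$ for the larger.

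Next I would invoke the structure theory for solvable groups with a disconnected degree graph. Theorem 5.5 (recalled above) supplies a unique prime $p$ whose Sylow subgroup of $\mathbf{F}(G)$ is noncentral, and, together with the usual passage to a section and to a chief factor, this allows me to replace $G$ by a concrete configuration: a normal elementary-abelian-type module $V$ on which a complement $A$ acts faithfully and coprimely, with the two components realized as (i) the primes appearing in the degrees contributed by $V$ through its inertia structure and (ii) the primes dividing the orbit sizes of $A$ on $\hat V$. In this language the disconnection hypothesis becomes the clean statement that these two families of primes are disjoint and that no single irreducible degree mixes them.

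The heart of the argument, and the step I expect to be the main obstacle, is the combinatorial count that turns the $a$ primes of the small component into at least $2^a-1$ primes of the large one. My plan is to index contributions by the $2^a-1$ nonempty subsets $S\subseteq\{r_1,\dots,r_a\}$: for each $S$ I would use the Hall $S$-subgroup $A_S$, which acts nontrivially by faithfulness, to manufacture an irreducible character whose degree is divisible by a prime $s_S$ lying in the large component. The delicate point is \emph{distinctness}, namely that $S\neq S'$ forces $s_S\neq s_{S'}$: if one prime served two different subsets, one could splice the corresponding characters or fixed-point data to produce a degree joining the two components, or else to contradict faithfulness and minimality of the action, each of which is impossible. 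Establishing this separation rigorously is exactly where the detailed coprime-action analysis must be done, and it is the portion requiring the most care. Once the injection $S\mapsto s_S$ is in hand, the large component contains at least $2^a-1$ vertices, which is the claimed inequality $b\geq 2^a-1$.
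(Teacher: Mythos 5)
This theorem is not proved in the paper at all: it is P\'alfy's inequality, quoted verbatim from \cite{P2} as background, so there is no internal proof to compare your attempt against. Judged on its own terms, your proposal is a plan rather than a proof, and the plan leaves open exactly the step that constitutes the entire content of the theorem. Your opening reduction is fine: by P\'alfy's three-prime condition from \cite{P1}, two vertices in one component together with any vertex of the other component must contain an adjacent pair, so each component is indeed a clique. The idea of indexing by the $2^a-1$ nonempty subsets $S$ of the small component's primes also matches the shape of the true argument. But the injectivity of $S\mapsto s_S$ is where the inequality actually lives, and your proposed mechanism for it --- ``splice the corresponding characters or fixed-point data to produce a degree joining the two components, or else contradict faithfulness and minimality'' --- is not an argument; nothing in the setup prevents, a priori, a single large-component prime from dividing the degrees manufactured from two different Hall $S$-subgroups, and no contradiction with faithfulness follows from that.

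In P\'alfy's actual proof the distinctness is not extracted from character-splicing but from number theory, after a much more substantial structural reduction: one uses the classification of solvable groups with disconnected degree graphs to reduce to a configuration $V\rtimes A$ with $V$ an irreducible faithful module of order $r^n$ and a metacyclic-type section controlling the small component, and then the $2^a-1$ distinct primes of the large component arise as primitive prime divisors (Zsigmondy primes) of the integers $r^{ed}-1$ as $d$ ranges over the $2^a-1$ divisors greater than $1$ of the squarefree product of the small-component primes; primitive prime divisors for distinct exponents are automatically distinct, and the exceptional cases of Zsigmondy's theorem must be handled separately. Without this input (or some equally concrete replacement), your count of $2^a-1$ does not materialize, so the proposal as written has a genuine gap at its central step.
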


\section{Main Results}

In this section we prove that $G$ has no normal nonabelian Sylow $p$-subgroup. Hypothesis \ref{newhype} below is strictly about the prime character degree graph $\Delta(G)$, whereas Hypothesis \ref{Ghype} is about the group $G$ itself. Further, we always assume $|G|$ is minimal.

\begin{hyp}\label{newhype}
Concerning $\Delta(G)$, we assume the following:
\begin{enumerate}[(i)]
    \item\label{00} for every vertex in $\rho$, there exists a nonadjacent vertex in $\pi$,
    \item\label{11} for every vertex in $\pi$, there exists a nonadjacent vertex in $\rho$,
    \item\label{33} all the vertices in $\pi$ are admissible. Moreover, no proper connected subgraph with vertex set $\{p\}\cup\pi^*\cup\rho$ occurs as the prime character degree graph of any solvable group,
    \item\label{44} for each vertex set $\pi^*\cup\rho^*$ which induces a complete subgraph in $\Delta(G)$, all the vertices in the corresponding set $\tau$ are admissible. Moreover, no proper connected subgraph with vertex set $\rho(G)\setminus\tau^*$ occurs as the prime character degree graph of any solvable group, and
    \item\label{55} if a disconnected subgraph with vertex set $\rho(G)$ does not occur, then it must specifically violate P\'alfy's inequality from \cite{P2}. Finally, if a disconnected subgraph with vertex set $\rho(G)$ does occur, then the sizes of the connected components must be $n>1$ and $2^n-1$.
\end{enumerate}
\end{hyp}

Observe the following consequences of Hypothesis \ref{newhype}. For \eqref{00} and \eqref{11}, since $\Delta(G)$ is assumed to be connected, it is required that $|\pi|\geq2$ and $|\rho|\geq2$. As another consequence of \eqref{11}, all vertices in $\pi$ are adjacent to each other. Otherwise, if there existed two nonadjacent vertices in $\pi$, then the complement graph would have an odd cycle. This is prohibited by the main theorem from \cite{etal}. Concerning \eqref{33}, one gleans that no proper connected subgraph with vertex set $\rho(G)$ occurs as the prime character degree graph of any solvable group (taking $\pi^*=\pi$). Depending on the initial subset $\pi^*\cup\rho^*$, \eqref{44} may require that some of the vertices in $\rho$ are admissible. Finally, we know from \eqref{55} that if a disconnected subgraph with vertex set $\rho(G)$ does indeed occur as the prime character degree graph of a solvable group, then it must be represented by Example 2.4 from \cite{L}.

\begin{hyp}\label{Ghype}
Concerning the group $G$, we assume $P$ is a normal nonabelian Sylow $p$-subgroup of $G$, and we let $H$ be a $p$-complement for $G$. We set $F:=\mathbf{F}(H)$, which is necessarily nontrivial.
\end{hyp}

Concerning $F$, it is known there exists a degree in $\cd(G)$ that is divisible by all the prime divisors of $|F|$. This corresponds to the vertices in $\pi(|F|)$ inducing a complete subgraph in $\Delta(G)$. Next, assuming $G$ has a normal nonabelian Sylow $p$-subgroup, we can now see the full force of Hypothesis \ref{newhype}\eqref{55}. As stated above, this hypothesis forces the disconnected graph (should it occur) to be represented by Example 2.4 from \cite{Lewis}. In that case, all normal Sylow subgroups are necessarily abelian. Therefore, assuming $N$ is a group such that $\rho(N)=\rho(G)$, we in fact get $\Delta(N)=\Delta(G)$ by Hypothesis \ref{newhype}\eqref{33}. Since $|G|$ is minimal we conclude that $N=G$.

\begin{lemma}\label{lemma00}
Assume Hypotheses \ref{newhype} and \ref{Ghype}. Then $C_H(P)=1$.
\end{lemma}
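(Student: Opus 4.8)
The plan is to show that $C:=C_H(P)$ is normal in $G$, assume for contradiction that $C\neq 1$, and construct from it a strictly smaller group satisfying the same hypotheses, contradicting the minimality of $|G|$. To see $C\trianglelefteq G$: since $P\trianglelefteq G$, the centralizer $C_G(P)$ is normal in $G$, and as $P$ is a Sylow $p$-subgroup we have $C_G(P)\cap P=Z(P)$, a central Sylow $p$-subgroup of $C_G(P)$; hence $C_G(P)=Z(P)\times K$ with $K$ its characteristic Hall $p'$-subgroup. Because $C=C_G(P)\cap H$ is itself a Hall $p'$-subgroup of $C_G(P)$, it equals $K$ and is therefore normal in $G$.

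Assume $C\neq 1$ and set $\bar G:=G/C$. Then $\bar P:=PC/C\cong P$ is a normal nonabelian Sylow $p$-subgroup of $\bar G$, and $\bar H:=H/C$ is a $p$-complement; because the kernel of the conjugation action of $H$ on $P$ is exactly $C$, the complement $\bar H$ acts faithfully on $\bar P$. Also $\bar H\neq 1$, for otherwise $G=P\times H$ and every prime of $\rho(H)$ would be adjacent to $p$ (multiply a nonlinear degree of $P$ by a suitable degree of $H$), forcing $\rho=\emptyset$ and violating the consequences of Hypothesis \ref{newhype}\eqref{00}--\eqref{11}. Hence $\mathbf{F}(\bar H)\neq 1$, and $\bar G$ satisfies Hypothesis \ref{Ghype}.

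The crux is to establish $\rho(\bar G)=\rho(G)$. One inclusion, and the fact that $\Delta(\bar G)$ is a subgraph of $\Delta(G)$, follow from $\cd(\bar G)\subseteq\cd(G)$. For the reverse I would invoke Lemma 3 from \cite{MLL} for both $G$ and $\bar G$, reducing the claim to $\rho(G/P'C)=\rho(G/P')$; both quotients carry the abelian normal Sylow $p$-subgroup $P/P'$. As $C$ centralizes $P$, it fixes every linear character of $P/P'$, so the $H$- and $(H/C)$-orbits on $\Irr(P/P')$ coincide, and each orbit length is realized by a character of $G/P'C$ (a canonical extension of a linear character times the trivial character of its stabilizer). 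Thus every prime arising from the action on $P/P'$ survives in $\bar G$, and the only primes in danger are those occurring solely among degrees of stabilizer characters that do not contain $C$ in their kernel.

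To finish, note that $\Delta(\bar G)$ cannot be disconnected: by Hypothesis \ref{newhype}\eqref{55} and the discussion following it, a disconnected graph for a group with a normal nonabelian Sylow subgroup forces all normal Sylow subgroups to be abelian, contradicting that $\bar P$ is nonabelian. So $\Delta(\bar G)$ is connected. If $\rho(\bar G)=\rho(G)$, the discussion immediately preceding this lemma gives $\Delta(\bar G)=\Delta(G)$ and, by minimality, $\bar G=G$, i.e. $C=1$. If instead vertices are lost, then $\Delta(\bar G)$ is a proper connected subgraph whose vertex set has the form $\{p\}\cup\pi^*\cup\rho$ or $\rho(G)\setminus\tau^*$ (the droppable vertices being admissible by Hypothesis \ref{newhype}\eqref{33}--\eqref{44}), which Hypothesis \ref{newhype}\eqref{33} or \eqref{44} forbids from occurring as a prime character degree graph, contradicting that it is $\Delta(\bar G)$. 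The main obstacle is exactly this last point: controlling which vertices can disappear in passing to $\bar G$ (equivalently, recovering the residual stabilizer-degree primes) and matching any lost-vertex set to an admissible or forbidden configuration of Hypothesis \ref{newhype}; this is where admissibility and minimality must be combined.
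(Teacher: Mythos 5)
Your overall strategy---pass to $G/C$, compare $\rho(G/C)$ with $\rho(G)$, and play admissibility against minimality of $|G|$---is the same as the paper's, and your normality argument for $C$ is fine. But there is a genuine gap at the crux you yourself flag: you never control \emph{which} vertices can disappear in passing to $\bar G=G/C$. The paper's proof pivots on first establishing $\rho\subseteq\rho(G/C)$: since $PC=P\times C$, every prime in $\rho(C)$ is adjacent to $p$, so $\rho(C)\subseteq\pi$ and every $r\in\rho$ divides $|H:C|$; then the faithful action of $H/C$ on $P$ forces $\pi(|H:C|)\subseteq\rho(G/C)$. Your substitute---comparing $H$- and $(H/C)$-orbits on $\Irr(P/P')$---has the acknowledged hole about stabilizer characters whose kernels do not contain $C$, and your fallback case analysis simply assumes that any lost vertices leave a vertex set of the form $\{p\}\cup\pi^*\cup\rho$ or $\rho(G)\setminus\tau^*$. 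Without the inclusion $\rho\subseteq\rho(G/C)$, vertices of $\rho$ could a priori be lost and neither form applies; moreover $\rho(G)\setminus\tau^*$ is not a configuration that arises here at all ($\tau$ is tied to a clique $\pi^*\cup\rho^*$ coming from the Fitting subgroup, which plays no role in this lemma).

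Two further steps would fail as written. First, even granting $\rho\subseteq\rho(G/C)$, the case $\rho(G/C)=\{p\}\cup\rho$ (all of $\pi$ lost) is not excluded by Hypothesis \ref{newhype}\eqref{33}, which requires $\pi^*$ nonempty; the paper must treat it separately, extending a nonlinear $\gamma\in\Irr(P)$ to $P\times C$ and applying Gallagher's theorem to get $\rho(H)\subseteq\pi$, hence a normal abelian Hall $\rho$-subgroup of $H$ and $O^q(G)<G$ for an admissible $q\in\pi$. Second, your claim that $\Delta(\bar G)$ is connected rests on Hypothesis \ref{newhype}\eqref{55} and the surrounding discussion, which concern only disconnected subgraphs with \emph{full} vertex set $\rho(G)$; when $\rho(G/C)$ is a proper subset you must still rule out the disconnected possibility, which the paper does via Theorem 5.5 from \cite{Lewis} to produce a central Sylow $q$-subgroup of $G/C$ for some $q\in\pi\setminus\pi^*$, again contradicting admissibility. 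Repairing your argument essentially amounts to reinstating the paper's three-case analysis on $\rho(G/C)$.
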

\begin{proof}
Set $C:=C_H(P)$. We follow the argument on page 259 in \cite{L} in a generalized form to show $C=1$.

First we observe that $C$ is normal in $G$, and that $PC=P\times C$. Therefore, any prime in $\rho(C)$ must be adjacent to $p$ in $\Delta(G)$. So we know that $\rho(C)\subseteq\pi$ and that $r$ divides $|H:C|$ for every $r\in\rho$. Since we know that $H/C$ acts faithfully on $P$, we see that the primes in $\pi(|H:C|)$ must lie in $\rho(G/C)$. Thus, $\rho\subseteq\rho(G/C)$. We then conclude that $\rho(G/C)$ must have one of the following vertex sets: (a) $\{p\}\cup\rho$, (b) $\{p\}\cup\pi^*\cup\rho$, or (c) $\{p\}\cup\pi\cup\rho$. Here $\pi^*$ is taken as a nonempty proper subset of $\pi$.

Suppose (a). Note that none of the primes in $\pi$ divide $|H:C|$. We fix a nonlinear character $\gamma\in\Irr(P)$ and then $\gamma$ extends to $\gamma\times1_C$ in $P\times C$. For $\chi\in\Irr(G|\gamma\times1_C)$ notice that $p$ divides $\chi(1)$ and $\chi(1)/\gamma(1)$ divides $|H:C|$. Since none of the primes in $\rho$ divide $\chi(1)$ and the only primes that could possibly divide $|H:C|$ are the primes in $\rho$, we are forced to conclude that $\chi_{PC}=\gamma\times1_C$, which implies $\chi_P=\gamma$. Using Gallagher's Theorem (Corollary 6.17 from \cite{I}) we have that $\chi(1)d\in\Irr(G)$ for every degree $d\in\cd(H)$. Thus, $\rho(H)\subseteq\pi$. This means that $H$ has a normal abelian Hall $\rho$-subgroup, and hence $O^q(G)<G$ for some $q\in\pi$. This is a contradiction since every vertex in $\pi$ is admissible.

Suppose (b). By Hypothesis \ref{newhype}\eqref{33}, we know that any connected subgraph with vertex set $\{p\}\cup\pi^*\cup\rho$ does not occur as the prime character degree graph of any solvable group. If $\Delta(G/C)$ is a disconnected graph that does occur, then by Theorem 5.5 from \cite{Lewis} we conclude that $G/C$ has a central Sylow $q$-subgroup for some $q\in\pi\setminus\pi^*$. This implies $O^q(G)<G$, which is a contradiction since every vertex in $\pi$ is admissible.

We are left with (c), in which case $\rho(G/C)=\{p\}\cup\pi\cup\rho=\rho(G)$. By Hypothesis \ref{newhype}\eqref{55}, we see that $|G/C|=|G|$, and hence $C=1$.
\end{proof}

Notice that Lemma \ref{lemma00} implies that $H$ acts faithfully on $P$. With this in hand, we now investigate $F$, the Fitting subgroup of $H\cong G/P$.

\begin{claim}\label{rhosub}
Assume Hypotheses \ref{newhype} and \ref{Ghype}. Then $F$ is not a $\rho$-subgroup.
\end{claim}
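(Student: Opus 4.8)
The plan is to argue by contradiction: suppose that $F$ is a $\rho$-subgroup, so that $\emptyset\neq\pi(|F|)\subseteq\rho$. Since $F\leq H$ and $C_H(P)=1$ by Lemma \ref{lemma00}, we have $C_F(P)=1$, so $F$ acts faithfully on $P$; as $(|F|,p)=1$ this action is coprime. Recall from the discussion preceding Lemma \ref{lemma00} that there is a degree in $\cd(G)$ divisible by every prime in $\pi(|F|)$, and that this degree may in fact be realized by some $\mu\in\Irr(H)=\Irr(G/P)\subseteq\Irr(G)$. My goal is to upgrade this to a \emph{single} degree of $G$ divisible by $p$ as well as by every prime of $\pi(|F|)$. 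Any such degree forces $\pi(|F|)\subseteq\pi$, and since $F$ is nontrivial this contradicts $\pi(|F|)\subseteq\rho$, proving the claim.

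To manufacture the factor of $p$, I would pass through a nonlinear character of $P$. Since $P$ is nonabelian it has a nonlinear $\gamma\in\Irr(P)$, and $p$ divides $\gamma(1)$. In the favorable case where $\gamma$ may be chosen $H$-invariant, coprimality lets $\gamma$ extend to $\widehat{\gamma}\in\Irr(G)$, and Gallagher's Theorem (Corollary 6.17 from \cite{I}) shows that $\widehat{\gamma}\mu\in\Irr(G)$ has degree $\gamma(1)\mu(1)$. This degree is divisible by $p$ and by every prime of $\pi(|F|)$, which is exactly the contradiction sought. The same conclusion follows whenever the inertia group $T:=I_H(\gamma)$ of a suitable nonlinear $\gamma$ is large enough that the restriction of $\mu$ to $T$ retains the primes of $\pi(|F|)$, since then Clifford theory produces an irreducible constituent of the induced character whose degree $|H:T|\gamma(1)\beta(1)$ (with $\beta\in\Irr(T)$) carries both $p$ and all of $\pi(|F|)$.

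The main obstacle is the general case, in which no nonlinear $\gamma\in\Irr(P)$ need be $H$-invariant and $H$ may permute the nonlinear characters of $P$ through orbits that split up the primes of $F$. Here one must couple the two sources of primes in the degree $|H:T|\gamma(1)\beta(1)$: the orbit length $|H:T|$ and the stabilizer degree $\beta(1)$ must together absorb all of $\pi(|F|)$ while $\gamma$ remains nonlinear so that $p\mid\gamma(1)$. I expect to control this using the faithful coprime action of the $\rho$-group $F$ on $P$ together with the fact that $C_H(F)\leq F$: faithfulness guarantees that each prime $r\in\pi(|F|)$ genuinely moves some nonlinear character of $P$, forcing $r$ to divide the corresponding orbit length, while $C_H(F)\leq F$ constrains how the various $r$-parts can be separated across distinct orbits. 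Assembling these local contributions into one degree that simultaneously meets every prime of $\pi(|F|)$ and the prime $p$ is the delicate point, and it is where I expect the bulk of the work to lie.
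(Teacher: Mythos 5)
Your strategy---manufacture a single degree of $G$ divisible by $p$ and by every prime of $\pi(|F|)$, which would force $\pi(|F|)\subseteq\pi$ and contradict $\pi(|F|)\subseteq\rho$---is logically sound as a strategy, but the proposal does not carry it out: the entire weight of the argument rests on the general case where no nonlinear $\gamma\in\Irr(P)$ is $H$-invariant, and there you offer only a heuristic that you yourself flag as unresolved. That heuristic does not close the gap. Faithfulness of the action of $F$ on $P$ does not force a prime $r\in\pi(|F|)$ to divide the length of any particular orbit of nonlinear characters, and even if each such $r$ divides some quantity $|H:I_H(\gamma)|\,\beta(1)$, the various primes of $\pi(|F|)$ may be absorbed by \emph{different} constituents $\chi$, so no single degree need collect all of them alongside $p$. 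Note also that the known fact quoted before Lemma \ref{lemma00} only supplies a degree of $H\cong G/P$ divisible by all of $\pi(|F|)$; such a degree is never divisible by $p$, so it cannot serve as the witness you want without precisely the coupling step you have not supplied.

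The paper runs the implication in the opposite direction and avoids this difficulty entirely. Fix a nonlinear $\gamma\in\Irr(P)$. For every $\chi\in\Irr(G|\gamma)$ the degree $\chi(1)$ is divisible by $p$, hence (as no vertex of $\rho$ is adjacent to $p$) both $\chi(1)$ and $\chi(1)/\gamma(1)$ are $\rho'$-numbers. Theorem 12.9 of \cite{MW} then yields that $H\cong G/P$ has an abelian Hall $\rho$-subgroup; since $F$ is by assumption a nilpotent normal $\rho$-subgroup, it lies in that Hall subgroup, which in turn centralizes $F$ and so lies in $C_H(F)\leq F$. Thus $F$ itself is a normal abelian Hall $\rho$-subgroup of $H$, which gives $O^q(G)<G$ for some $q\in\pi$ and contradicts the admissibility of the vertices of $\pi$ via Lemma \ref{lemma0}. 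In short, the correct move is not to chase one degree divisible by $p$ and all of $\pi(|F|)$, but to exploit the fact that \emph{every} degree lying over a nonlinear character of $P$ misses $\rho$, and to convert that uniform divisibility statement into structural information about $H$ through Theorem 12.9. Replacing your orbit analysis with that application is the missing ingredient.
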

\begin{proof}
For the sake of contradiction, we assume that $F$ is a $\rho$-subgroup. Let $\gamma\in\Irr(P)$ be a nonlinear character and let $\chi\in\Irr(G|\gamma)$. We see that $p$ divides both $\gamma(1)$ and $\chi(1)$, and that $r$ does not divide $\chi(1)$ or $\chi(1)/\gamma(1)$ for all $r\in\rho$. Since this is true for all characters $\chi\in\Irr(G|\gamma)$, we apply Theorem 12.9 of \cite{MW} to get that $G/P\cong H$ has an abelian Hall $\rho$-subgroup.

Since we assumed that $F$ was in fact a $\rho$-subgroup, we now get that $F$ is the abelian Hall $\rho$-subgroup. Thus $O^q(G)<G$ for some $q\in\pi$, which is a contradiction since all the vertices in $\pi$ are admissible. Therefore $F$ is not a $\rho$-subgroup.
\end{proof}

\begin{claim}\label{pirhosub}
Assume Hypotheses \ref{newhype} and \ref{Ghype}. Then $F$ is not a $\pi^*\cup\rho^*$-subgroup.
\end{claim}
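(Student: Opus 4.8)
The plan is to argue by contradiction, assuming $F$ is a $\pi^*\cup\rho^*$-subgroup, and to follow the template of Claim \ref{rhosub} until the mixed structure of $F$ forces a normal abelian Hall subgroup whose existence contradicts the admissibility built into Hypothesis \ref{newhype}. First I would reduce to the sharpest case by taking $\pi^*=\pi(|F|)\cap\pi$ and $\rho^*=\pi(|F|)\cap\rho$; since $F$ is nilpotent and $H$ is a $p'$-group we have $p\nmid|F|$ and $\pi(|F|)=\pi^*\cup\rho^*$, which induces a complete subgraph as recorded before Lemma \ref{lemma00}. Both $\pi^*$ and $\rho^*$ are nonempty, for otherwise $F$ would be a $\pi$-group (covered by Lemma 4.2 of \cite{L}) or a $\rho$-group (excluded by Claim \ref{rhosub}). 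Writing $F=F_{\pi^*}\times F_{\rho^*}$ for the Hall decomposition of the nilpotent group $F$, each factor is characteristic in $F$ and hence normal in $H$, and $F_{\rho^*}=O_\rho(H)$.

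Next I would reproduce the character-theoretic step of Claim \ref{rhosub}. Fix a nonlinear $\gamma\in\Irr(P)$ and let $\chi\in\Irr(G\mid\gamma)$. Then $p$ divides $\gamma(1)$ and $\chi(1)$, and since $p$ is nonadjacent in $\Delta(G)$ to every prime of $\rho$ (which contains $\rho^*$), no prime of $\rho$ can divide $\chi(1)$, and therefore none divides $\chi(1)/\gamma(1)$. As this holds for every such $\chi$, Theorem 12.9 of \cite{MW} applies and shows that $H\cong G/P$ has an abelian Hall $\rho$-subgroup $R$, with $F_{\rho^*}=O_\rho(H)\le R$.

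The crux is to convert this abelian Hall $\rho$-subgroup into a contradiction with admissibility. In Claim \ref{rhosub} this was immediate because there $F$ itself was the Hall $\rho$-subgroup, so $R=F$ was normal; with a mixed $F$, this normality is precisely what must be recovered. I would therefore aim to show that $R$ is normal in $H$, giving a normal abelian Hall $\rho$-subgroup and, exactly as in Claim \ref{rhosub}, $O^q(G)<G$ for some $q\in\pi$, contradicting the admissibility of every vertex of $\pi$ guaranteed by Hypothesis \ref{newhype}\eqref{33}. To analyze normality I would pass to $\bar H:=H/F_{\rho^*}$, where $O_\rho(\bar H)=1$ since any normal $\rho$-subgroup of $\bar H$ lifts to a normal $\rho$-subgroup of $H$ contained in $O_\rho(H)=F_{\rho^*}$, while $\bar R=R/F_{\rho^*}$ remains an abelian Hall $\rho$-subgroup of $\bar H$.

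The main obstacle will be establishing this normality, since $F_{\rho^*}$ is normal in $H$ but not in general in $G$, so the normal-Sylow mechanism of Claim \ref{rhosub} does not transfer verbatim, and the difficulty is sharpest when $\rho^*\subsetneq\rho$. I expect to exploit $C_H(F)\le F$ together with the coprime action of the $\rho$-group $R$ on the $\pi^*$-group $F_{\pi^*}$ to control $\mathbf{F}(\bar H)$ and pin down $\bar R$. Should a direct proof of normality fail, the fallback is graph-theoretic: using the completeness of $\pi^*\cup\rho^*$ and Hypothesis \ref{newhype}\eqref{44}, I would locate a normal section of $G$ whose prime character degree graph is a proper connected subgraph on the vertex set $\rho(G)\setminus\tau^*$ for a suitable $\tau^*\subseteq\tau$, which is impossible by \eqref{44}, or else force $O^q(G)<G$ for an admissible vertex $q\in\tau$. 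A disconnected outcome would instead be routed through \eqref{55} and the discussion following Hypothesis \ref{Ghype}.
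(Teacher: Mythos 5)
There is a genuine gap, and it sits exactly where you flag it. Your character-theoretic step fixes a nonlinear $\gamma\in\Irr(P)$ and therefore only exploits the nonadjacency of $p$ with the primes of $\rho$; it reproduces verbatim the conclusion of Claim \ref{rhosub} (an abelian Hall $\rho$-subgroup $R$ of $H$) and never sees the $\pi^*$-part of $F$ at all. To turn that into a contradiction you must show $R$ is normal in $H$, but observe that if $R$ were normal then, being abelian, it would lie in $\mathbf{F}(H)=F$, forcing $\rho\subseteq\pi(|F|)$ and hence $\rho^*=\rho$ --- which is exactly the negation of the situation under consideration (Hypothesis \ref{newhype}\eqref{00} and \eqref{11} force $\rho^*\subsetneq\rho$). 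So ``$R$ is normal'' is equivalent in strength to the entire contradiction you are after, and none of the tools you propose ($C_H(F)\le F$, coprime action of $R$ on $F_{\pi^*}$, $O_\rho(H/F_{\rho^*})=1$) can deliver it: a solvable group with trivial $\rho$-core can perfectly well have a non-normal abelian Hall $\rho$-subgroup (already $S_3$ with $\rho=\{2\}$), so some further input from the degree graph is indispensable and your sketch does not supply it. The graph-theoretic fallback is likewise not a proof: you do not identify the normal section whose graph would live on $\rho(G)\setminus\tau^*$, and Hypothesis \ref{newhype}\eqref{44} cannot be invoked without one.

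The paper's proof avoids this trap by making the character see all of $F$: it chooses $\gamma\in\Irr(PF)$ with $\gamma(1)$ divisible by every prime of $\pi(|F|)=\pi^*\cup\rho^*$ (possible because $\pi(|F|)$ induces a complete subgraph), and applies Theorem 12.9 of \cite{MW} to the set $\eta$ of primes nonadjacent to some member of $\pi(|F|)$; crucially $\eta$ meets both $\pi\setminus\pi^*$ and $\rho\setminus\rho^*$. Normality is then obtained for free rather than fought for: setting $M=O_{\mathcal{B}}(H)\supseteq F$ and letting $E/M=\mathbf{F}(H/M)$, the abelian Hall $\eta$-subgroup of $H/F$ is identified with $E/M$, which is normal by construction. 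The contradiction comes from comparing $\rho(PE)$ (which contains $\{p\}\cup\pi^*\cup\rho^*\cup\eta$, hence equals $\rho(G)\setminus\tau^*$ or $\rho(G)$) against Hypothesis \ref{newhype}\eqref{44} and \eqref{55}, using in the latter case that $\eta\cap\pi\neq\varnothing$ and $E/M$ is abelian to get $O^s(G)<G$ for some admissible $s\in\pi$. I would redirect your argument along these lines: the decomposition $F=F_{\pi^*}\times F_{\rho^*}$ and the attempt to normalize a Hall $\rho$-subgroup are dead ends here.
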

\begin{proof}
For the sake of contradiction, we assume that $F$ is a $\pi^*\cup\rho^*$-subgroup. It is sufficient to suppose $\pi(|F|)=\pi^*\cup\rho^*$. Observe that in this scenario, Hypothesis \ref{newhype}\eqref{00} and \eqref{11} force $\pi^*$ and $\rho^*$ to be proper subsets of $\pi$ and $\rho$, respectively.

We fix the following notation:
$$\eta:=\{s\in\rho(G)\setminus(\{p\}\cup\pi(|F|))~:~s \text{~and~} q \text{~are not adjacent in~} \Delta(G) \text{~for some~} q\in\pi(|F|)\}.$$
In fact, $\eta=\rho(G)\setminus(\{p\}\cup\mathcal{B})$ and $\eta$ must contain at least one element from $\pi\setminus\pi^*$ and from $\rho\setminus\rho^*$. For otherwise this would contradict Hypothesis \ref{newhype}\eqref{00} and \eqref{11} as $\pi^*\cup\rho^*$ forms a complete subgraph of $\Delta(G)$. Thus $\eta$ is nonempty.

Let $\gamma\in\Irr(PF)$ be such that $q$ divides $\gamma(1)$ for all $q\in\pi^*\cup\rho^*$. For any character $\chi\in\Irr(G|\gamma)$, we have that $q$ divides $\chi(1)$ for all $q\in\pi^*\cup\rho^*$, and that $r$ does not divide $\chi(1)$ or $\chi(1)/\gamma(1)$ for any $r\in\eta$. By Theorem 12.9 from \cite{MW}, we see that $G/PF\cong H/F$ has an abelian Hall $\eta$-subgroup. Let $M=O_\mathcal{B}(H)$, and note that $F\subseteq M$. Set $E/M$ to be the Fitting subgroup of $H/M$. Since $H/M$ has an abelian Hall $\eta$-subgroup, we see that $E/M$ is the abelian Hall $\eta$-subgroup. 

Next we have that $\rho(PE)$ contains $\{p\}\cup\pi^*\cup\rho^*\cup\eta$. However this may be a proper subset of $\rho(G)$. Notice that by construction we have that $\rho(G)\setminus(\{p\}\cup\pi^*\cup\rho^*\cup\eta)=\tau$. Hence $\rho(PE)$ must be one of the following: (a) $\rho(G)\setminus\tau^*$ or (b) $\rho(G)$.

Suppose (a). Then by Hypothesis \ref{newhype}\eqref{44} we see that no proper connected subgraph with vertex set $\rho(G)\setminus\tau^*$ occurs as the prime character degree graph of any solvable group. If $\Delta(PE)$ is a disconnected graph that occurs, then by Theorem 5.5 from \cite{Lewis} we observe that $PE$ has a central Sylow $q$-subgroup for some $q\in\tau^*$. This implies $O^q(G)<G$, which is a contradiction since every vertex in $\tau$ is admissible.

We are left with (b). So $\rho(PE)=\rho(G)$, and by Hypothesis \ref{newhype}\eqref{55} we see that $PE=G$. Since $E/M$ is abelian, and since $\eta\cap\pi\neq\varnothing$, we have that $O^s(G)<G$ for some $s\in\pi$. This is a contradiction since all the vertices in $\pi$ are admissible. Therefore $F$ is not a $\pi^*\cup\rho^*$-subgroup.
\end{proof}

\begin{claim}\label{pisub}
Assume Hypotheses \ref{newhype} and \ref{Ghype}. Then $F$ is not a $\pi$-subgroup.
\end{claim}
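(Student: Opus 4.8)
The plan is to assume, for contradiction, that $F$ is a $\pi$-subgroup and to route the argument through Lewis's Lemmas 4.2 and 4.3 of \cite{L}, which are designed precisely for a $p$-complement whose Fitting subgroup is a $\pi$-group. Two of the three hypotheses of Lemma 4.2 are already in hand: $H$ acts faithfully on $P$ by Lemma \ref{lemma00}, and $F=\mathbf{F}(H)$ is a $\pi$-group by assumption. So the first real task is to verify $\rho\subseteq\rho(H)$.

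For this, note that $O_r(H)\le\mathbf{F}(H)=F$ for every prime $r$; since $F$ is a $\pi$-group, $O_r(H)=1$ for each $r\in\rho$. On the other hand, each $r\in\rho$ divides $|H|$, as recorded (with $C=1$) in the proof of Lemma \ref{lemma00}. If some $r\in\rho$ were missing from $\rho(H)$, the It\^o--Michler theorem would produce a nontrivial normal abelian Sylow $r$-subgroup of $H$, i.e.\ a nontrivial $O_r(H)$, a contradiction; hence $\rho\subseteq\rho(H)$, and Lemma 4.2 yields $\rho(H)=\rho$. It is worth pausing to see why this case is harder than Claim \ref{rhosub}: running It\^o--Michler in the other direction, every $q\in\pi$ divides $|H|$ but lies outside $\rho(H)=\rho$, so $H$ has a normal abelian Sylow $q$-subgroup for each such $q$, and thus $F$ is an abelian normal Hall $\pi$-subgroup with $H/F$ a $\rho$-group. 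This mirrors Claim \ref{rhosub} with $\pi$ and $\rho$ interchanged, but now $PF\trianglelefteq G$ with $G/PF\cong H/F$ a $\rho$-group only yields $O^r(G)<G$ for some $r\in\rho$, and the vertices of $\rho$ need not be admissible. So the structural observation alone is not enough, and a finer count is required.

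That count is supplied by Lemma 4.3. With $\rho(H)=\rho\neq\varnothing$ in hand, I would first reduce to the case that $P'$ is minimal normal in $G$, replacing $G$ by $G/N$ for a subgroup $N<P'$ maximal among proper $G$-invariant subgroups of $P'$; here $P/N$ remains nonabelian (as $(P/N)'=P'/N\neq1$) and $HN/N\cong H$ (since $H\cap P'=1$), so $\mathbf{F}$ stays a $\pi$-group and $\rho(HN/N)=\rho$ persists. Applying Lemma 4.3 then gives a nonempty $\pi^*\subseteq\pi$ for which $\Delta(P'H)$ (computed in the reduced group) is disconnected with components $\rho$ and $\pi^*$, where $|\pi^*|\ge2^{|\rho|}-1$.

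The final step is to play this disconnected graph against Hypothesis \ref{newhype}. The natural route is to apply Theorem 5.5 from \cite{Lewis} to the two components of $\Delta(P'H)$ to locate a vertex $q\in\pi$ whose Sylow subgroup is central, forcing $O^q(G)<G$ and contradicting the admissibility of every vertex in $\pi$; alternatively, the component sizes $|\rho|=n$ and $|\pi^*|\ge2^n-1$ are matched against the rigid size dichotomy in Hypothesis \ref{newhype}\eqref{55} (and P\'alfy's inequality) to rule out the configuration. I expect this extraction to be the main obstacle, for two linked reasons: Lemma 4.3 describes the \emph{subgroup} $P'H$, whereas the admissibility machinery of Lemma \ref{lemma0} and Hypothesis \ref{newhype}\eqref{55} apply most cleanly to \emph{quotients} of $G$ (so one must migrate the information, e.g.\ through $G/P'$ using the fact that $\rho(G/P')=\rho(G)\setminus\{p\}$), and one must check that the reduction to $P'$ minimal normal preserves every graph-theoretic hypothesis. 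Pinning down exactly which $q\in\pi$ carries the central Sylow subgroup is the delicate point.
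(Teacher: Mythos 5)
Your skeleton matches the paper's exactly --- Lemma 4.2 of \cite{L} to obtain $\rho(H)=\rho$, minimal normality of $P'$, then Lemma 4.3 of \cite{L} to produce the disconnected graph $\Delta(P'H)$ with components $\rho$ and $\pi^*$ where $|\pi^*|\geq 2^{|\rho|}-1$ --- and your It\^o--Michler justification of $\rho\subseteq\rho(H)$ correctly fills in a step the paper leaves implicit. One small remark on the reduction: since $|G|$ is assumed minimal throughout, you need not replace $G$ by $G/N$; the paper shows directly that any $G$-invariant $X<P'$ with $P'/X$ chief satisfies $\rho(G/X)=\rho(G)$ (via $\rho(G/P')=\rho(G)\setminus\{p\}$ and the fact that $G/X$ still has a normal nonabelian Sylow $p$-subgroup), whence $\Delta(G/X)=\Delta(G)$ and $X=1$ by minimality.

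The genuine gap is the endgame, which you leave unexecuted and for which your preferred route does not work. Applying Theorem 5.5 of \cite{Lewis} to $P'H$ tells you only that exactly one prime has a noncentral Sylow subgroup of the Fitting subgroup of $P'H$; it does not identify that prime, does not hand you a normal abelian Sylow subgroup of $G$ (the conclusion concerns $\mathbf{F}(P'H)$ inside the subgroup $P'H$, not a quotient of $G$), and so does not yield $O^q(G)<G$ for some $q\in\pi$. In fact no central Sylow subgroup needs to be located at all: the contradiction is purely numerical. The only candidate components of a disconnected subgraph of $\Delta(G)$ on the full vertex set $\rho(G)$ are $\rho$ and $\{p\}\cup\pi$, and Hypothesis \ref{newhype}\eqref{55} rigidly constrains the pair of sizes: either no such disconnected graph occurs, in which case it must specifically violate P\'alfy's inequality, so the larger component has size less than $2^a-1$ where $a$ is the smaller; or it occurs with component sizes $n>1$ and $2^n-1$. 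In each of the four resulting subcases one plays $|\pi^*|\leq|\pi|<|\{p\}\cup\pi|$ against $|\pi^*|\geq 2^{|\rho|}-1$ (and its consequence $|\pi^*|\geq|\rho|$) to reach a contradiction; for instance, if $|\rho|=a\leq b=|\{p\}\cup\pi|<2^a-1$, then $|\pi^*|\leq|\pi|<b<2^{|\rho|}-1$. Your second, parenthetical alternative gestures at this, but as stated it matches the components of $\Delta(P'H)$ (namely $\rho$ and $\pi^*$) against Hypothesis \ref{newhype}\eqref{55}, whereas that hypothesis governs graphs on all of $\rho(G)$, whose components are $\rho$ and $\{p\}\cup\pi$; keeping these two graphs distinct is precisely what makes the four-case count go through.
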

\begin{proof}
For the sake of contradiction, we suppose $F$ is a $\pi$-subgroup of $H$ and hence $\rho\subseteq\rho(H)$. Furthermore, $H$ acts faithfully on $P$ as consequence of Lemma \ref{lemma00}. We apply Lemma 4.2 from \cite{L} which yields $\rho(H)=\rho$.

Next we will see that $P'$ is minimal normal in $G$. Suppose that $X$ is normal in $G$ such that $1\leq X\leq P'$ and $P'/X$ is a chief factor for $G$. Our goal will be to show that $X=1$. We know $\rho(G/X)\supseteq\rho(G/P')=\rho(G)\setminus\{p\}$ (by Theorem 3 from \cite{MLL}). However, $p\in\rho(G/X)$ since $G/X$ has a normal nonabelian Sylow $p$-subgroup. It follows that $\rho(G)=\rho(G/X)$, and hence $|G/X|=|G|$ by Hypothesis \ref{newhype}\eqref{55}. Therefore $X=1$ and $P'$ is minimal normal in $G$.

The hypotheses of Lemma 4.3 from \cite{L} are satisfied, and therefore $\Delta(P'H)$ has disconnected components with vertex sets $\rho$ and $\pi^*$ such that
\begin{equation}\label{cool}
|\pi^*|\geq2^{|\rho|}-1.
\end{equation}
In particular, notice that
\begin{equation}\label{good}
|\rho|\leq|\pi^*|.
\end{equation}

Hypothesis \ref{newhype}\eqref{55} gives two cases concerning the possibility of a disconnected subgraph of $\Delta(G)$ with vertex set $\rho(G)$. We will investigate these below, but first notice that by our assumptions, the two connected components of a possibly disconnected subgraph of $\Delta(G)$ with vertex set $\rho(G)$ are forced to have vertex sets $\rho$ and $\{p\}\cup\pi$. 

For Case 1, we consider if no disconnected subgraph of $\Delta(G)$ with vertex set $\rho(G)$ occurs. By Hypothesis \ref{newhype}\eqref{55} we know that for component sizes $a$ and $b$ (with $a\leq b$), we must have the situation where $b<2^a-1$. This gives two subcases:

Case 1(a): $|\rho|=a$ and $|\{p\}\cup\pi|=b$. For this scenario we have
$$|\pi^*|\leq|\pi|<|\{p\}\cup\pi|=b<2^a-1=2^{|\rho|}-1,$$
which contradicts \eqref{cool}.

Case 1(b): $|\{p\}\cup\pi|=a$ and $|\rho|=b$. Thus
$$|\rho|=b\geq a=|\{p\}\cup\pi|>|\pi|\geq|\pi^*|,$$
which contradicts \eqref{good}.

For Case 2, we consider if a disconnected subgraph of $\Delta(G)$ with vertex set $\rho(G)$ occurs. By Hypothesis \ref{newhype}\eqref{55} we know the components must be of sizes $n>1$ and $2^n-1$, which gives two subcases:

Case 2(a): $|\rho|=n>1$ and $|\{p\}\cup\pi|=2^n-1$. Observe that
$$|\pi^*|\leq|\pi|<|\{p\}\cup\pi|=2^n-1=2^{|\rho|}-1,$$
which contradicts \eqref{cool}.

Case 2(b): $|\{p\}\cup\pi|=n>1$ and $|\rho|=2^n-1$. Similarly notice
$$|\rho|=2^n-1>n=|\{p\}\cup\pi|>|\pi|\geq|\pi^*|,$$
which contradicts \eqref{good}. Therefore $F$ is not a $\pi$-subgroup.
\end{proof}

We have shown that no primes in $\rho(G)\setminus\{p\}$ will divide $|F|$. This leads to the following result:

\begin{theorem}\label{newmain}
Assume Hypothesis \ref{newhype}. Then $G$ has no normal nonabelian Sylow $p$-subgroup.
\end{theorem}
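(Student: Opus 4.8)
The plan is to assume, for contradiction, that $G$ has a normal nonabelian Sylow $p$-subgroup $P$, so that Hypothesis \ref{Ghype} is in force and we may deploy the machinery already assembled in this section. Let $H$ be a $p$-complement for $G$ and set $F := \mathbf{F}(H)$, which is nontrivial by Hypothesis \ref{Ghype}. The entire strategy is to extract a contradiction by pinning down the set of primes $\pi(|F|)$ dividing the order of $F$, using the three claims proved above as a case analysis that exhausts every possibility for where those primes can live relative to the partition $\rho(G) = \{p\}\cup\pi\cup\rho$.

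First I would observe that since $F = \mathbf{F}(H)$ is the Fitting subgroup of the $p$-complement, no prime in $\pi(|F|)$ equals $p$; hence $\pi(|F|) \subseteq \pi \cup \rho$. The key structural fact, recalled in the paragraph following Hypothesis \ref{Ghype}, is that the primes in $\pi(|F|)$ induce a complete subgraph of $\Delta(G)$, because there is a degree in $\cd(G)$ divisible by every prime divisor of $|F|$. Thus $\pi(|F|)$ is a complete vertex set contained in $\pi \cup \rho$, and so it decomposes as $\pi(|F|) = \pi^* \cup \rho^*$ for some subsets $\pi^* \subseteq \pi$ and $\rho^* \subseteq \rho$.

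Next I would run the exhaustive case analysis on the shape of $\pi(|F|)$. Claim \ref{rhosub} rules out the case $\pi^* = \varnothing$ (that is, $F$ being a $\rho$-subgroup), Claim \ref{pisub} rules out the case $\rho^* = \varnothing$ (that is, $F$ being a $\pi$-subgroup), and Claim \ref{pirhosub} rules out the genuinely mixed case in which both $\pi^*$ and $\rho^*$ are nonempty (so that $F$ is a $\pi^*\cup\rho^*$-subgroup). Since $F$ is nontrivial, $\pi(|F|)$ is nonempty, so at least one of $\pi^*$, $\rho^*$ is nonempty; the three claims together cover all remaining possibilities. Each possibility yields a contradiction, so the standing assumption that $G$ has a normal nonabelian Sylow $p$-subgroup must fail.

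The main obstacle, which is really the conceptual heart of the argument, is verifying that Claims \ref{rhosub}, \ref{pirhosub}, and \ref{pisub} genuinely partition the space of possibilities for $\pi(|F|)$ with no gaps. The subtlety is that Claim \ref{pirhosub} is phrased for a fixed complete vertex set $\pi^*\cup\rho^*$, and one must confirm that \emph{every} nonempty complete subset of $\pi\cup\rho$ arising as $\pi(|F|)$ is covered by exactly one of the three claims; the completeness of $\pi(|F|)$ as a subgraph is precisely what licenses applying Claim \ref{pirhosub} with $\pi^*\cup\rho^* = \pi(|F|)$. Once that dichotomy-trichotomy is seen to be exhaustive, the theorem follows immediately, since each branch was already shown to contradict either the admissibility of the vertices in $\pi$ and $\tau$ or the minimality of $|G|$ via Hypothesis \ref{newhype}\eqref{55}.
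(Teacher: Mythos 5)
Your proposal is correct and follows essentially the same route as the paper: assume Hypothesis \ref{Ghype} for contradiction, observe that $\pi(|F|)\subseteq\pi\cup\rho$ induces a complete subgraph, and use Claims \ref{rhosub}, \ref{pirhosub}, and \ref{pisub} to exhaust all possibilities, forcing $F=1$ against the solvability of $H$. Your explicit verification that the three claims cover every nonempty complete subset of $\pi\cup\rho$ is a slightly more careful articulation of what the paper leaves implicit, but it is the same argument.
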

\begin{proof}
For the sake of contradiction, suppose Hypothesis \ref{Ghype}. That is, suppose $G$ has a normal nonabelian Sylow $p$-subgroup $P$, and let $H$ be a $p$-complement for $G$ with Fitting subgroup $F$.

By Claims \ref{rhosub}, \ref{pirhosub}, and \ref{pisub}, we see that $F$ must be trivial since no primes divide $|F|$. As $H$ is solvable it must have a nontrivial Fitting subgroup, a contradiction.
\end{proof}

Consider the following graphs with the appropriate vertex $p$ labeled in Figure \ref{fig} for several examples and applications of Theorem \ref{newmain}.
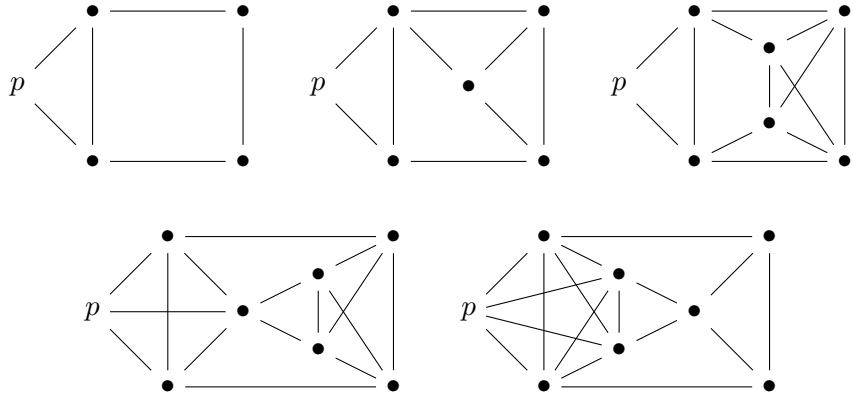
\begin{figure}[H]
    \centering
$
\begin{tikzpicture}[scale=2]
\node (1a) at (1.5,1) {$\bullet$};
\node (2a) at (1.5,0) {$\bullet$};
\node (3a) at (.5,1) {$\bullet$};
\node (4a) at (.5,0) {$\bullet$};
\node (5a) at (0,.5) {$p$};
\path[font=\small,>=angle 90]
(1a) edge node [right] {$ $} (2a)
(1a) edge node [above] {$ $} (3a)
(2a) edge node [above] {$ $} (4a)
(3a) edge node [above] {$ $} (4a)
(3a) edge node [right] {$ $} (5a)
(4a) edge node [above] {$ $} (5a);
\node (1b) at (2.5,1) {$\bullet$};
\node (2b) at (2,.5) {$p$};
\node (3b) at (3.5,1) {$\bullet$};
\node (4b) at (2.5,0) {$\bullet$};
\node (5b) at (3,.5) {$\bullet$};
\node (6b) at (3.5,0) {$\bullet$};
\path[font=\small,>=angle 90]
(1b) edge node [right] {$ $} (2b)
(1b) edge node [above] {$ $} (3b)
(1b) edge node [above] {$ $} (4b)
(2b) edge node [above] {$ $} (4b)
(3b) edge node [right] {$ $} (5b)
(3b) edge node [above] {$ $} (6b)
(1b) edge node [above] {$ $} (5b)
(4b) edge node [above] {$ $} (6b)
(5b) edge node [above] {$ $} (6b);
\node (1c) at (4,.5) {$p$};
\node (2c) at (4.5,1) {$\bullet$};
\node (3c) at (4.5,0) {$\bullet$};
\node (4c) at (5,.75) {$\bullet$};
\node (5c) at (5,.25) {$\bullet$};
\node (6c) at (5.5,1) {$\bullet$};
\node (7c) at (5.5,0) {$\bullet$};
\path[font=\small,>=angle 90]
(1c) edge node [right] {$ $} (2c)
(1c) edge node [above] {$ $} (3c)
(2c) edge node [above] {$ $} (3c)
(2c) edge node [above] {$ $} (4c)
(2c) edge node [right] {$ $} (6c)
(3c) edge node [above] {$ $} (5c)
(3c) edge node [right] {$ $} (7c)
(4c) edge node [above] {$ $} (5c)
(4c) edge node [above] {$ $} (6c)
(4c) edge node [above] {$ $} (7c)
(5c) edge node [above] {$ $} (6c)
(5c) edge node [above] {$ $} (7c)
(6c) edge node [above] {$ $} (7c);
\node (1y) at (.5,-1) {$p$};
\node (2y) at (1,-.5) {$\bullet$};
\node (3y) at (1,-1.5) {$\bullet$};
\node (4y) at (1.5,-1) {$\bullet$};
\node (5y) at (2,-.75) {$\bullet$};
\node (6y) at (2,-1.25) {$\bullet$};
\node (7y) at (2.5,-.5) {$\bullet$};
\node (8y) at (2.5,-1.5) {$\bullet$};
\path[font=\small,>=angle 90]
(1y) edge node [right] {$ $} (2y)
(1y) edge node [above] {$ $} (3y)
(1y) edge node [above] {$ $} (4y)
(2y) edge node [above] {$ $} (3y)
(2y) edge node [above] {$ $} (4y)
(2y) edge node [right] {$ $} (7y)
(3y) edge node [above] {$ $} (4y)
(3y) edge node [right] {$ $} (8y)
(4y) edge node [above] {$ $} (5y)
(4y) edge node [above] {$ $} (6y)
(5y) edge node [above] {$ $} (6y)
(5y) edge node [above] {$ $} (7y)
(5y) edge node [above] {$ $} (8y)
(6y) edge node [above] {$ $} (7y)
(6y) edge node [above] {$ $} (8y)
(7y) edge node [above] {$ $} (8y);
\node (1z) at (3,-1) {$p$};
\node (2z) at (3.5,-.5) {$\bullet$};
\node (3z) at (3.5,-1.5) {$\bullet$};
\node (4z) at (4,-.75) {$\bullet$};
\node (5z) at (4,-1.25) {$\bullet$};
\node (6z) at (4.5,-1) {$\bullet$};
\node (7z) at (5,-.5) {$\bullet$};
\node (8z) at (5,-1.5) {$\bullet$};
\path[font=\small,>=angle 90]
(1z) edge node [right] {$ $} (2z)
(1z) edge node [above] {$ $} (3z)
(1z) edge node [above] {$ $} (4z)
(1z) edge node [above] {$ $} (5z)
(2z) edge node [above] {$ $} (3z)
(2z) edge node [right] {$ $} (4z)
(2z) edge node [above] {$ $} (5z)
(2z) edge node [right] {$ $} (7z)
(3z) edge node [above] {$ $} (4z)
(3z) edge node [above] {$ $} (5z)
(3z) edge node [above] {$ $} (8z)
(4z) edge node [above] {$ $} (5z)
(4z) edge node [above] {$ $} (6z)
(5z) edge node [above] {$ $} (6z)
(6z) edge node [above] {$ $} (7z)
(6z) edge node [above] {$ $} (8z)
(7z) edge node [above] {$ $} (8z);
\end{tikzpicture}
$
    \caption{Graphs satisfying  Hypothesis \ref{newhype}}
    \label{fig}
\end{figure}

\end{document}